\theoremstyle{definition}
\newtheorem{The}{Theorem}
\newtheorem{Pro}[The]{Proposition}
\newtheorem{Rem}[The]{Remark}
\begin{document}

\title[Large deviations for piecewise expanding maps]{A note on the large deviations for piecewise expanding multidimensional maps}
\author{R. Aimino}
\address{R. Aimino\\ UMR-6207 Centre de Physique Th\'eorique, CNRS,
Universit\'es d'Aix-Marseille, Universit\'e du Sud, Toulon-Var and
FRUMAM, F\'ed\'ederation de Recherche des Unit\'es de
Math\'ematiques de Marseille\\ CPT, Luminy Case 907, F-13288
Marseille Cedex 9, France} \email {romain.aimino@orange.fr}
\author{ S. Vaienti}
\address{S. Vaienti\\ UMR-6207 Centre de Physique Th\'eorique, CNRS,
Universit\'es d'Aix-Marseille, Universit\'e du Sud, Toulon-Var and
FRUMAM, F\'ed\'ederation de Recherche des Unit\'es de
Math\'ematiques de Marseille\\ CPT, Luminy Case 907, F-13288
Marseille Cedex 9, France} \email {vaienti@cpt.univ-mrs.fr}

\thanks{SV warmly thanks E. Ugalde for the kind invitation to participate to the Conference in honor
of Valentin Afraimovich; RA and SV express their sincere gratitude
to I. Melbourne who helped them to simply the proofs.}

\begin{abstract}
We provide the large deviation principle for higher dimensional
piecewise expanding maps and by using the functional approach of
Hennion and Herv\'e, slightly modified.
\end{abstract}

\maketitle

\section{Introduction}

There are different ways to establish large deviation principles
(LDP) for dynamical systems. One of them is the so-called "Laplace
Method", which relies on the spectral properties of the
Perron-Frobenius, or transfer, operator. This strategy has been
developed in a very general and abstract setting by Hennion and
Herv\'e in \cite{HH}. They assume  that the transfer operator acts
on a Banach spaces of measurable functions, and it is quasi-compact
on it, i.e. it  has a spectral gap. The existence of an invariant
probability measure follows immediately, and, by using perturbation
theory for linear operators, they derive a few others statistical
properties. This approach covers a lot of systems, for instance
expanding maps of the interval \cite[XII.1]{HH}, Gibbs measures for
subshift of finite type \cite[XII.2]{HH}, and  expanding Young
towers \cite{LSY1}. Nevertheless, this theory seems  inappropriate
for expanding (discontinuous)  maps in higher dimension, like those
treated  by several authors \cite{Ad, Bl, BG, Bu, BK, C, Liv2, PGB, S, Th, Tho,
Ts}. Indeed, Hennion and Herv\'e assume that the Banach space on
which  the transfer operator acts,  consists of measurable functions
defined everywhere, but for the higher dimensional systems quoted
above, the functional spaces usually considered (bounded variation,
quasi-H\"older or Sobolev spaces) consist of classes of equivalence
of functions modulo the reference measure, and hence they are only
defined almost everywhere. Furthermore, in \cite{HH}, the Dirac
masses must belong to the topological dual of the Banach space, so
this theory cannot be applied directly for those systems.

Nevertheless it appears that one could  slightly modify the proofs
from \cite{HH} in order to deal with a Banach space consisting of
classes of functions. In particular we will do it for the functional
space of quasi-H\"older functions mostly investigated in \cite{S}
and which verify an additional algebraic assumption which also plays
a role in the Hennion-Herv\'e approach. As a consequence we will get
the large deviations principle for such systems and, as far as we
know, this result is not present in the literature. We also prove
the central limit theorem, but for the latter one already disposes
of the  Gordin-Liverani theorems \cite{Go, Liv}.
\\Actually a weaker result for the large
deviations of systems like those considered above has been recently
obtained in \cite{AFLV}. We will comment about the difference with
the spectral technique presented in this note in the Remark 2 below.
We anticipate here that the paper \cite{AFLV} furnishes an {\em
upper} bound for the deviation functions and whenever the
correlation functions involving $L^1$ observables decay to zero with
a summable rate. In order to check these assumptions for our systems
we should further require that the density of the invariant measure
is essentially bounded from below, but this assumption  is not
necessary in the spectral approach discusses later on.
\section{Assumptions and statement of the results}

We  now give the precise assumptions under which the LDP is valid.
Let $(X, \mathcal{A}, m)$ a probability space, and $T : X \to X$ a
measurable transformation, non singular with respect to $m$.
 Under these conditions, the
Perron-Frobenius operator $P : L^1(m) \to L^1(m)$ is well defined by
$Pf = \frac{d m_f}{d m}$, where $m_f(A) =
\int_{T^{-1}A} f \, dm$ is absolutely continuous with respect to
$m$. We stress here the fact that the functions under consideration
are complex valued, as required by the  spectral theory we are going
to use below. The transfer operator enjoys some classical properties
that we resume below; see \cite{BoGo} or \cite{LM} for more details.

\begin{enumerate}
\item {\em Linearity}   : $P$ is a linear operator on $L^1(m)$, satisfying $||Pf||_1 \le ||f||_1$ for all $f \in L^1(m)$;
\item {\em Positivity} : For all $f \in L^1(m)$ such that $f \ge 0$ $m$-ae, we have $Pf \ge 0$ $m$-ae;
\item {\em Preservation of integrals} : For all $f \in L^1(m)$, we have $\int Pf \, dm = \int f \, dm$;
\item {\em Duality} : For all $f \in L^1(m)$ and $g \in L^{\infty}(m)$, we have $\int f (g \circ T) \, dm = \int (Pf) g \, dm$;
\item {\em Invariant Measures} : $f \in L^1(m)$ is the density of a $T$-invariant probability if and only if $f \ge 0$, $\int f \, dm = 1$ and $Pf = f$.
\end{enumerate}

 ~ \\ \noindent Let us suppose now that we have a subspace $\mathcal{B} \subset L^1(m)$, equipped with a norm $|| \, . \, ||_{\mathcal{B}}$ such that
\begin{enumerate}
\item $(\mathcal{B}, || \, . \, ||_{\mathcal{B}})$ is a complex Banach space with continuous injection $\mathcal{B} \to L^1(m)$;
\item Constant functions lie in $\mathcal{B}$;
\item $\mathcal{B}$ is a Banach algebra : there exists $C > 0$ such that for all $f, g \in \mathcal{B}$ we have $fg \in \mathcal{B}$ with $||fg||_{\mathcal{B}} \le C ||f||_{\mathcal{B}} ||g||_{\mathcal{B}}$;
\item $\mathcal{B}$ is a complex Banach lattice : for every $f \in \mathcal{B}$, we have $\bar{f}, |f| \in \mathcal{B}$;
\item $\mathcal{B}$ is stable under $P$ : $P(\mathcal{B}) \subset \mathcal{B}$;
\item $P$ is a bounded operator on $\mathcal{B}$, with spectral radius equal to one;
\item $P$ is quasi-compact of diagonal type on $\mathcal{B}$.
\end{enumerate}

The last assertion means that there exists a decomposition $$P =
\sum_{i=1}^s \lambda_i \Pi_i + Q$$ where $\lambda_i$ are complex
numbers of modulus $1$, $\Pi_i$ are finite-rank projections
satisfying $\Pi_i \Pi_j = 0$ when $i \neq j$ and $Q$ is a bounded
operator on $\mathcal{B}$ with spectral radius strictly less than
$1$ and satisfying $Q \Pi_i = \Pi_i Q = 0$ for all $i$. The spectrum
of $P$ consists then of a finite number of eigenvalues of modulus
$1$, with finite multiplicity, and the rest of the spectrum lies in
a disc centered at $0$ with radius strictly less than $1$. When
$\mathcal{B}$ is compactly injected in $L^1(m)$, this can be deduced
from a Lasota-Yorke type inequality, by means of the Ionescu-Tulcea
and Marinescu theorem \cite{ITM, H}. See \cite{HH} for precise
definitions and results about quasi-compactness.

Under those conditions, the existence of an $T$-invariant
probability $\mu$ absolutely continuous w.r.t $m$, such that
$\frac{d \mu}{dm} \in \mathcal{B}$ is a classical result : for every
$f \in \mathcal{B}$ such that $f \ge 0$, $\int f \, dm = 1$, and so
in particular for $f = \mathds{1}$, quasi-compactness implies that
the sequence $\frac{1}{n} \sum_{k=0}^{n-1} P^k f$ converges in
$\mathcal{B}$ to a function $f^{\star}$ such that the measure $\mu_f$ with $\frac{d\mu_f}{dm} = f^{\star}$ is an
acip. Furthermore, $1$ is an eigenvalue of $P$. If we assume that
$1$ is a simple eigenvalue of $P$, then there exists an unique acip
$\mu$ such that $\frac{d \mu}{ dm} \in \mathcal{B}$. From now, we
will always assume that $1$ is a simple eigenvalue, and that there
is no other eigenvalue of modulus $1$. $\mu$ will denote the unique
acip, and $v \in \mathcal{B}$ its density. We then have
\footnote{When $\varphi \in \mathcal{B}^{\star}$ belongs to the
topological dual of $\mathcal{B}$, we denote $<\varphi, f> =
\varphi(f)$. The linear form $f \to \int f \, dm$ belongs to
$\mathcal{B}^{\star}$, and we denote it by $m$.}, for all $n \ge 1$
and $f \in \mathcal{B}$ $$P^nf = <m,f> v + Q^n f$$ As a consequence,
we get exponential decay of correlation : there exists $C \ge 0$ and
$0 \le \lambda < 1$ such that for every $f \in \mathcal{B}$ and
every $g \in L^{\infty}(\mu)$ we have $$\left\vert \int f(g \circ
T^n) \, d \mu - \int f \, d \mu \int g \, d \mu \right\vert \le C
\lambda^n ||f||_{\mathcal{B}} ||g||_{L^{\infty}_{\mu}}$$

Let now $\phi : X \to \mathbb{R}$ a bounded observable which lie in
$\mathcal{B}$, with zero mean $\int \phi \, d \mu = 0$. Denote by
$S_n$ the Birkhoff sums : $$S_n = \sum_{k=0}^{n-1} \phi \circ T^k$$
We are now able to state the LDP :

\begin{The} (Large Deviation Principle)
\em

\noindent Under the above conditions, the limit $\sigma^2 = \lim_{n
\to \infty} \int ( \frac{S_n}{\sqrt{n}} )^2 \, d \mu$ exists, and if
$\sigma^2 > 0$, then there exists for some $\epsilon_0 > 0$ a rate
function $c : \,  ]-\epsilon_0, + \epsilon_0[ \to \mathbb{R}$,
continuous, strictly convex, vanishing only at $0$, such that for
every $0 < \epsilon < \epsilon_0$ and every probability measure $\nu$ with $\nu \ll m$ and $\frac{d \nu}{dm} \in \mathcal{B}$, we have $$\lim_{n \to \infty}
\frac{1}{n} \log \nu (S_n > n \epsilon) = - c(\epsilon)$$
\end{The}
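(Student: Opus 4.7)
The plan is to adapt the classical Laplace method used by Hennion--Herv\'e to the present setting where $\mathcal{B}$ consists of equivalence classes. The key object is the family of perturbed transfer operators $P_z f := P(e^{z\phi} f)$ for $z \in \mathbb{C}$ near $0$. The Banach algebra property, together with $\phi \in \mathcal{B}$ bounded, ensures that $e^{z\phi} = \sum_{k \ge 0} \frac{z^k \phi^k}{k!}$ converges in $\mathcal{B}$ for every $z$, and that multiplication by $e^{z\phi}$ is a bounded operator on $\mathcal{B}$. Thus $z \mapsto P_z \in L(\mathcal{B})$ is analytic. A direct induction using the duality property gives the fundamental identity
\begin{equation*}
\int P_z^n f \, dm = \int f \, e^{z S_n} \, dm, \qquad f \in \mathcal{B},\, n \ge 1,
\end{equation*}
which, applied to $f = d\nu/dm$, reduces the study of the moment generating function of $S_n$ under $\nu$ to the spectral analysis of $P_z$.

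Next, since $P = P_0$ is quasi-compact with a simple isolated eigenvalue at $1$ (and no other peripheral spectrum), Kato's analytic perturbation theory yields, for $z$ in a complex neighborhood of $0$, a decomposition $P_z = \lambda(z) \Pi_z + Q_z$, where $\lambda(z)$ is analytic with $\lambda(0)=1$, $\Pi_z$ is a rank-one analytic projection with $\Pi_0 f = v \int f\, dm$, and $\|Q_z^n\|_{\mathcal{B}} \le C \rho^n$ uniformly in $z$ for some $\rho<1$. Differentiating the eigenvalue equation $P_z h_z = \lambda(z) h_z$ at $z=0$ and using the zero-mean condition gives $\lambda'(0) = \int \phi\, d\mu = 0$, and the standard computation based on the exponential decay of correlations gives $\lambda''(0)= \sigma^2$, so that $\Lambda(z):= \log \lambda(z)$ is real-analytic on a real neighborhood of $0$ with $\Lambda(0)=\Lambda'(0)=0$ and $\Lambda''(0)=\sigma^2>0$.

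From these ingredients I would derive, for every real $t$ sufficiently small,
\begin{equation*}
\int e^{t S_n} \, d\nu = \lambda(t)^n \langle m, \Pi_t f\rangle + \langle m, Q_t^n f\rangle,
\end{equation*}
and observe that $\langle m, \Pi_t f\rangle$ is continuous in $t$ with value $1$ at $t=0$, while the remainder is $O(\rho^n)$; this gives $\frac{1}{n}\log \int e^{t S_n}\, d\nu \to \Lambda(t)$. The LDP then follows exactly as in \cite{HH}: one applies a Chernoff upper bound $\nu(S_n>n\epsilon) \le e^{-nt\epsilon}\int e^{tS_n}d\nu$ and optimizes over $t$, and matches it with a lower bound obtained by a Cram\'er-type change of measure using the eigenfunction $h_t$ of $P_t$, which is in $\mathcal{B}$ and (for small real $t$) bounded below by a positive constant on the support of $\mu$. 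The resulting rate function $c(\epsilon) = \sup_t (t\epsilon - \Lambda(t))$ is automatically continuous, strictly convex, and vanishes only at $0$ on $\,]-\epsilon_0, \epsilon_0[\,$ in view of the properties of $\Lambda$.

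The main obstacle, and the point where the modification of \cite{HH} really matters, is precisely the lower bound and the Cram\'er change of measure. In the original Hennion--Herv\'e setup one uses Dirac masses in $\mathcal{B}^\star$ to localize, but here $\mathcal{B}$ is a space of equivalence classes and $\delta_x \notin \mathcal{B}^\star$. One must therefore recast every evaluation as an integral against a density in $\mathcal{B}$: the pre-factors $\langle m, \Pi_t f\rangle$ and $\langle m, h_t\rangle$ are controlled through the continuous injection $\mathcal{B}\hookrightarrow L^1(m)$ and the analyticity of $\Pi_t$, and the positivity of $h_t$ on $\operatorname{supp}\mu$ is obtained from $h_0 = v$ together with the spectral gap, using only $L^1$ estimates rather than pointwise ones. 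Once these points are arranged, the rest of the Hennion--Herv\'e machinery applies verbatim and yields both the existence of $\sigma^2$ (via the variance formula $\sigma^2 = \int \phi^2 d\mu + 2\sum_{k\ge 1}\int \phi (\phi\circ T^k)\, d\mu$ coming from $\lambda''(0)$) and the announced large deviation upper and lower bounds.
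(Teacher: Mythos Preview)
Your overall strategy --- perturbed operators $P_z$, analytic perturbation theory, convergence of $\frac{1}{n}\log\int e^{\theta S_n}\,d\nu$ to $\Lambda(\theta)=\log\lambda(\theta)$, then a G\"artner--Ellis/Chernoff conclusion --- matches the paper's. Where you go astray is in locating the modification of Hennion--Herv\'e and in your lower-bound argument. You propose a Cram\'er change of measure relying on the assertion that the perturbed eigenfunction $h_t$ is ``bounded below by a positive constant on the support of~$\mu$''. This is exactly the kind of pointwise statement that is unavailable in a space of equivalence classes, and it need not hold: even $h_0=v$ may fail to be essentially bounded below (the paper's Remark explicitly stresses that no such hypothesis on the density is required). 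Your subsequent remark that positivity of $h_t$ is to be obtained ``using only $L^1$ estimates rather than pointwise ones'' is in tension with this, and in any case nonnegativity in $L^1$ is not enough to run a Cram\'er tilt. So this step, as sketched, has a genuine gap.

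The paper sidesteps the issue entirely: once the scaled CGF limit is established, it invokes the local G\"artner--Ellis theorem (Lemma~XIII.2 in Hennion--Herv\'e), a purely probabilistic statement that needs only the CGF limit and the strict convexity of $\Lambda$; no dynamical lower-bound argument involving $h_t$ is required. The place where Dirac masses must actually be replaced is earlier, in proving that for real $\theta$ one can take $\lambda(\theta)>0$, $v(\theta)\ge 0$, and the eigenfunctional $\varphi(\theta)\ge 0$ (the paper's Proposition~5). Hennion--Herv\'e use $\delta_x\in\mathcal{B}^\star$ to produce a positive functional separating a nonzero positive $f$ from~$0$; here one simply uses the reference measure $m$, which works precisely because $f\ge 0$, $f\neq 0$ in $\mathcal{B}$ forces $\langle m,f\rangle=\int f\,dm>0$. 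With this positivity, the prefactor $\langle\varphi(\theta),f\rangle\langle m,v(\theta)\rangle$ in the expansion of $\int e^{\theta S_n}f\,dm$ is seen to be positive, the CGF limit follows, and G\"artner--Ellis yields both bounds at once.
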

As an easy consequence of the techniques introduced in the next section, we
also get the central limit theorem. We denote with $\mathcal{N}(0,
\sigma^2)$ the Dirac mass $\delta_0$ if $\sigma^2 = 0$, and the
probability with density $\frac{1}{\sigma \sqrt{2\pi}} e^{-
\frac{t^2}{2 \sigma^2}}$ with respect to Lebesgue if $\sigma^2
> 0$.

\begin{The} (Central Limit Theorem)
\em

\noindent Under the same assumptions  of Theorem 1,
$\frac{S_n}{\sqrt{n}}$ converges in distribution to $\mathcal{N}(0,
\sigma^2)$ in the probability space $(X, \mathcal{A}, \nu)$ for every probability
$\nu$ with $\nu \ll m$ and $\frac{d\nu}{dm} \in \mathcal{B}$ : for
every bounded continuous function $g : \mathbb{R} \to \mathbb{R}$,
we have
$$\lim_{n \to \infty} \int g(\frac{S_n}{\sqrt{n}}) d\nu = \int g
\, d \mathcal{N}(0, \sigma^2)$$

\end{The}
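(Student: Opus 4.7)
The strategy is the Nagaev--Guivarc'h perturbation method. For $t \in \mathbb{R}$, introduce the perturbed transfer operator $P_t : \mathcal{B} \to \mathcal{B}$ defined by $P_t f = P(e^{it\phi} f)$. Since $\mathcal{B}$ is a Banach algebra containing the constants and $\phi \in \mathcal{B}$, the power series $e^{it\phi} = \sum_{k \ge 0} (it\phi)^k/k!$ converges in $\mathcal{B}$ (using $\|\phi^k\|_{\mathcal{B}} \le C^{k-1}\|\phi\|_{\mathcal{B}}^k$), so $t \mapsto e^{it\phi}$, and therefore $t \mapsto P_t$, is real analytic in operator norm. Iterating the duality property $n$ times yields
$$\int e^{it S_n}\, h\, dm \;=\; \int P_t^n h \, dm \qquad (h \in L^1(m)),$$
which we apply with $h = d\nu/dm \in \mathcal{B}$ and with $t$ replaced by $t/\sqrt{n}$.

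Quasi-compactness of $P_0 = P$ together with analytic perturbation theory (as exposed in \cite{HH}) produces, for $|t|$ sufficiently small, a decomposition
$$P_t \;=\; \lambda(t)\, \Pi_t \,+\, Q_t,$$
where $\lambda(t)$ is analytic with $\lambda(0)=1$, $\Pi_t$ is an analytic family of rank-one projections with $\Pi_0 f = <m,f>\, v$, and $Q_t$ satisfies $\Pi_t Q_t = Q_t \Pi_t = 0$ together with a uniform bound $\|Q_t^n\|_{\mathcal{B}} \le C\rho^n$ for some $\rho<1$. Consequently
$$\int e^{itS_n/\sqrt{n}}\, d\nu \;=\; \lambda(t/\sqrt{n})^n\, <m,\Pi_{t/\sqrt{n}} h> \;+\; <m,Q_{t/\sqrt{n}}^n h>.$$
The last term is $O(\rho^n)$, while continuity of $\Pi_t$ gives $<m,\Pi_{t/\sqrt{n}} h> \to <m,h><m,v> = 1$.

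It remains to Taylor-expand $\lambda$ at $0$. Differentiating the eigenvalue relation $P_t u_t = \lambda(t)\, u_t$ at $t=0$, where $u_t$ is the analytic eigenfunction normalized by $<m,u_t>=1$, and using $\int \phi\, d\mu = 0$, gives $\lambda'(0) = 0$; a second differentiation, combined with inversion of $I-P$ on the $\Pi_0$-complement $\ker(m) \cap \mathcal{B}$, identifies $\lambda''(0) = -\sigma^2$, where $\sigma^2$ is the limit asserted in Theorem 1 (the Green--Kubo variance). Therefore $\lambda(t/\sqrt{n})^n \to e^{-\sigma^2 t^2/2}$, so the characteristic function of $S_n/\sqrt{n}$ under $\nu$ converges pointwise to that of $\mathcal{N}(0,\sigma^2)$, including the degenerate case $\sigma^2=0$ in which the limit is the constant $1$. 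L\'evy's continuity theorem then yields the announced convergence in distribution. The principal technical point is the identification $\lambda''(0) = -\sigma^2$: the analytic perturbation and the duality manipulation are essentially mechanical, but differentiating the eigenvalue equation twice and resumming the resulting correlation sums via quasi-compactness must be carried out carefully.
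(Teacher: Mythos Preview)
Your proposal is correct and follows essentially the same route as the paper: the Nagaev--Guivarc'h perturbed transfer operators, the duality identity $\int e^{itS_n/\sqrt{n}}\,d\nu = \langle m, P_{it/\sqrt{n}}^n h\rangle$, analytic perturbation of the dominant eigenvalue (the paper's Proposition~4), the Taylor expansion of $\lambda$ at $0$, and L\'evy's continuity theorem. The only cosmetic difference is that the paper parametrises the perturbation by $z\in\mathbb{C}$ via $P_z f = P(e^{z\phi}f)$ and records $\lambda''(0)=\sigma^2$, whereas you build the factor $i$ into the definition and obtain $\lambda''(0)=-\sigma^2$; these are consistent, and your explicit differentiation of the eigenvalue equation is what the paper outsources to \cite{HH}, Corollaries~III.6 and~III.11.
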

\begin{Rem} ~
\begin{enumerate}
\item Theorems 1 and 2 apply in particular for $\nu = m$ and $\nu = \mu$, so the LDP and the CLT are valid for both reference and invariant measures.
\item As we anticipated in the Introduction,  the paper \cite{AFLV} gives  an upper bound for the large deviation function and  under related
assumptions. In particular Th. E in \cite{AFLV} states the
following, with our notations. Let us suppose that  $T$ preserves an
ergodic probability measure $\mu$; then let $\mathcal{B} \subset
L^1(\mu)$, $\phi\in \mathcal{B}$,  and assume that there exists
$\xi(n)$ with $\sum_{n=0}^{\infty}\xi(n)<\infty$ such that for all
$\psi\in L^1(\mu)$ we have $\left\vert \int \phi(\psi \circ T^n) \,
d \mu - \int \phi \, d \mu \int \psi \, d \mu \right\vert \le \xi(n)
||f||_{\mathcal{B}} ||g||_{L^{\infty}_{\mu}}$. Then there exists
$\tau=\tau(\phi)>0$ and, for every $\epsilon>0$, there exists
$C=C(\phi,\epsilon)>0$ such that $\mu(S_n> n\epsilon)\le Ce^{-\tau
n}$.

The proof of this result relies on a martingale approximation.
Section C in \cite{AFLV} provides examples of systems for which one
gets exponential decay of correlations (and hence $\xi(n)$ is
summable) and against $\psi\in L^1(\mu)$: this last assumption
requires the density to be bounded from below.
 In conclusion the result of  our note  extends the previous one in two directions : first, we obtain a LDP, and not only an upper
 bound; secondly,  we do not have to assume anything about the
 density, but the fact that it is in the $L^1$ norm with respect to
 the conformal (reference) measure.
\end{enumerate}
\end{Rem}
\section{Proofs}
We begin by observing that the existence of the variance $\sigma^2$
results from a straightforward computation : since the sequence
$\int \phi(\phi \circ T^n) \, d\mu$ decays exponentially fast, it is
absolutely summable, and then we see, by expanding the term $S_n^2$,
that the limit $\sigma^2 = \lim_{n \to \infty} \int
(\frac{S_n}{\sqrt{n}})^2 \, d \mu$ exists and we have $$\sigma^2 =
\int \phi^2 \, d \mu + 2 \sum_{n=1}^{+ \infty} \int \phi (\phi \circ
T^n) \, d \mu$$ We assume from now $\sigma^2 > 0$. Our proof of the
LDP follows closely \cite{HH} except for a minor modification, which
will be mentioned later. The same approach had been employed in
\cite{RBY}. Let $f \in \mathcal{B}$ the density of the measure $\nu$ with respect to $m$. We will apply Gartner-Ellis theorem \cite{DZ, El}, so we
are interested in the convergence of the sequence $\frac{1}{n} \log
\int e^{\theta S_n} f \, dm$ for $\theta \in \mathbb{R}$ small
enough. We introduce the "Laplace transform" operators $P_z$, for $z
\in \mathbb{C}$, defined by $$P_z(f) = P(e^{z \phi}f), \: f \in
\mathcal{B}$$ Assuming for a moment that $P_z$ is well defined, we
see immediately that we have $\int e^{\theta S_n} f \, dm = \int
P^n_{\theta}(f) \, dm$. In order to prove that $P_z$ is a bounded
operator on $\mathcal{B}$, we just have to check that $e^{z \phi}
\in \mathcal{B}$. Since $\mathcal{B}$ is a Banach algebra, the
sequence $\sum_{k=0}^n \frac{(z\phi)^k}{k!}$ converges in
$\mathcal{B}$, and hence in $L^1(m)$. On the other hand, this
sequence converges uniformly, and hence in $L^1(m)$, to $e^{z
\phi}$, and so we get that $$e^{z \phi} = \sum_{n=0}^{+\infty}
\frac{(z \phi)^n}{n!}$$ in $\mathcal{B}$. It also proves that the
map $z \to P_z$ is holomorphic and we have the expansion $$P_z =
\sum_{n=0}^{+\infty}\frac{C_n}{n!} z^n$$ where $C_n(f) = P(\phi^n
f)$. \\ We can now apply perturbation theory for linear operator to
prove the following result. The proof relies on analytic functions
of operators, see \cite{DS}, or on the implicit function theorem,
see \cite{HH}. For $\theta > 0$, we denote $\mathbb{D}_{\theta} = \{
z \in \mathbb{C} \, / \, |z| < \theta \}$.

\begin{Pro} \em

There exist $\theta_0 > 0$, $C > 0$, $\eta_1, \eta_2 > 0$ and
holomorphic functions $\lambda(.) : \mathbb{D}_{\theta_0} \to
\mathbb{C}$, $v(.) : \mathbb{D}_{\theta_0} \to \mathcal{B}$,
$\varphi(.) : \mathbb{D}_{\theta_0} \to \mathcal{B}^{\star}$ and
$Q(.) : \mathbb{D}_{\theta_0} \to \mathcal{L}(\mathcal{B})$ such
that for all $z \in \mathbb{D}_{\theta_0}$ \\ (i) $ \lambda(0) = 1,
v(0) = v, \varphi(0) = m, Q(0) = Q$; \\ (ii) $P_z(f) = \lambda(z)
<\varphi(z), f> v(z) + Q(z) f$ for all $f \in \mathcal{B}$; \\ (iii)
$<\varphi(z), v(z)> = 1$; \\ (iv) $Q(z)v(z) = 0$ and $\varphi(z)
Q(z) = 0$; \\ (v) $|\lambda(z)| > 1 - \eta_1$; \\ (vi) $||Q(z)^n||
\le C (1 - \eta_1 - \eta_2)^n$.
\end{Pro}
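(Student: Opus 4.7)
The plan is to apply standard analytic perturbation theory to the holomorphic family $z \mapsto P_z$ around $z = 0$. We already know that $P_0 = P$ has $1$ as a simple isolated eigenvalue, with the rest of its spectrum contained in a disc of radius $r < 1$, and that $z \mapsto P_z$ is holomorphic from a neighborhood of $0$ into $\mathcal{L}(\mathcal{B})$.

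First, I would fix a small oriented circle $\Gamma$ of radius $\delta$ centered at $1$ that separates $1$ from the remainder of $\sigma(P)$. A Neumann series argument, combined with the uniform boundedness of $(\zeta - P)^{-1}$ on $\Gamma$, shows that $(\zeta - P_z)^{-1}$ exists and is jointly holomorphic on $\Gamma \times \mathbb{D}_{\theta_0}$ for sufficiently small $\theta_0 > 0$. This allows me to define the spectral projection
$$\Pi(z) = \frac{1}{2 \pi i} \oint_\Gamma (\zeta - P_z)^{-1} \, d\zeta,$$
which is holomorphic in $z$, satisfies $\Pi(0) = \Pi$ (the rank-one projection $f \mapsto <m,f> v$), and, after further shrinking $\theta_0$, has rank one for all $|z| < \theta_0$ by continuity.

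Next, to extract the eigenvector, dual eigenfunctional, and eigenvalue as holomorphic objects, I set $v(z) = \Pi(z) v$, holomorphic with $v(0) = v$, and define $\varphi(z) = \Pi(z)^{\star} m / <m, v(z)>$, which is well-defined and holomorphic near $0$ since $<m, v> = 1$. A direct check using that $\Pi(z)$ is a rank-one projection yields $\Pi(z) f = <\varphi(z), f> v(z)$ and, via $\Pi(z)^2 = \Pi(z)$, the normalization (iii). This is the minor modification alluded to in the text: the integration functional $m \in \mathcal{B}^{\star}$ takes over the role played by Dirac masses in the original Hennion-Herv\'e construction. Since $P_z$ commutes with $\Pi(z)$ and preserves its one-dimensional range, the action on that range is multiplication by a scalar $\lambda(z) = <m, P_z v(z)> / <m, v(z)>$, holomorphic with $\lambda(0) = 1$, and we have $P_z v(z) = \lambda(z) v(z)$. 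Setting $Q(z) = P_z - \lambda(z) \Pi(z) = P_z (I - \Pi(z))$ then yields the decomposition (ii) and the orthogonality relations (iv), while (v) follows immediately from continuity of $\lambda$.

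The main obstacle is the uniform geometric estimate (vi). I would represent $Q(z)^n$ via the Dunford-Taylor integral
$$Q(z)^n = \frac{1}{2 \pi i} \oint_{\Gamma'} \zeta^n (\zeta - P_z)^{-1} (I - \Pi(z)) \, d\zeta$$
over a circle $\Gamma'$ centered at the origin of radius $1 - \eta_1 - \eta_2$, chosen strictly above the spectral radius of $Q(0)$ but well inside $\{|\zeta| < 1 - \eta_1\}$. The delicate point is the joint choice of $\delta$, $\eta_1$, $\eta_2$, and $\theta_0$ so that $\Gamma'$ lies in the resolvent set of $P_z$ uniformly in $|z| < \theta_0$, and $(\zeta - P_z)^{-1}(I - \Pi(z))$ stays uniformly bounded there; once this is arranged, compactness of $\Gamma'$ at once yields $\|Q(z)^n\| \le C (1 - \eta_1 - \eta_2)^n$.
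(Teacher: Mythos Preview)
Your argument is correct and is exactly the Dunford--Schwartz analytic functional calculus route that the paper cites (alongside the implicit function theorem approach of Hennion--Herv\'e) in place of a written-out proof; the construction of $\Pi(z)$, $v(z)$, $\varphi(z)$, $\lambda(z)$, and the contour estimate for $Q(z)^n$ are all standard and sound. One small expository point: the ``minor modification'' the paper alludes to is not located in this proposition but in the next one (Proposition~5), where the reference measure $m$ replaces Dirac masses in order to exhibit a \emph{positive} functional that does not annihilate a given nonnegative nonzero element --- your use of $m$ to normalize $\varphi(z)$ here is perfectly legitimate, but it is not the step the paper is flagging.
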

So, for all $n \ge 1$, we have $$P_z^n(f) = \lambda(z)^n
<\varphi(z), f> v(z) + Q(z)^n f$$ We can say much more on
eigenvalues and eigenvectors when $z = \theta$ is real. At this
point, we need to show that for every positive  function $f \in
\mathcal{B}$ with $f \neq 0$, there exists a positive linear form
$\varphi \in \mathcal{B}^{\star}$ such that $<\varphi, f> \, > 0$.
In the context of \cite{HH}, since functions are defined everywhere,
there exists $x \in X$ such that $f(x) > 0$, and so the Dirac mass
$\delta_x$ does the job. In our context, Dirac masses are not
available, but the reference measure is usable , since necessarily
$<m,f>
> 0$, otherwise, $f$ would be $0$ $m$-ae, and so $f = 0$ in
$\mathcal{B}$. This was not the case in \cite{HH} because they
consider functions defined everywhere, and not classes of
equivalence. We can also use arguments from complex Banach lattice
theory \cite{MeN} : a modification of the Hahn-Banach theorem shows
that there exists a positive bounded linear form $\varphi$ on
$\mathcal{B}_{\mathbb{R}} = \{ f \in \mathcal{B} \, / \, f(x) \in
\mathbb{R} \; m{\rm -ae}\}$, such that $< \varphi, f> = 1$, and then
we can extend it on all $\mathcal{B}$. This argument could be
employed in more abstract contexts, where the Banach space
$\mathcal{B}$ consists of distributions-like objects and when we
don't have a good knowledge of its topological dual.

\begin{Pro} \em

There exists $0 < \theta_1 < \theta_0$ such that for every $\theta
\in \mathbb{R}$ with $|\theta| < \theta_1$, we have $\lambda(\theta)
> 0$. Furthermore, $v(.)$ and $\varphi(.)$ can be redefined such
that $v(\theta) \ge 0$, $\varphi(\theta) \ge 0$.
\end{Pro}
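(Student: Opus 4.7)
The plan is to combine the positivity of $P_\theta$ for real $\theta$ (which holds because $e^{\theta\phi}\ge 0$ when $\phi$ is real-valued) with the simplicity of the dominant eigenvalue furnished by Proposition 1, and then run a Perron-Frobenius type argument.

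First I would show $\lambda(\theta)\in\mathbb{R}$ for real $\theta$ close to $0$. Since $\phi$ is real and $\mathcal{B}$ is a Banach lattice, $P_\theta$ sends $\mathcal{B}_{\mathbb{R}}$ into itself for real $\theta$. In particular, if $v(\theta)$ is an eigenvector of $P_\theta$ at $\lambda(\theta)$, then $\overline{v(\theta)}$ is an eigenvector at $\overline{\lambda(\theta)}$. By items (v) and (vi) of Proposition 1, after possibly shrinking $\theta_0$, $\lambda(\theta)$ is the \emph{unique} spectral value of $P_\theta$ of modulus $>1-\eta_1$, and it is simple and isolated. Thus $\overline{\lambda(\theta)}=\lambda(\theta)$, and since $\lambda(0)=1$ one gets $\lambda(\theta)>0$ for $|\theta|<\theta_1$ for some $0<\theta_1<\theta_0$ by continuity.

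Next I would adjust $v(\theta)$ and $\varphi(\theta)$ to be real for real $\theta$. Because the eigenspace is one-dimensional, $\overline{v(\theta)}=c(\theta)v(\theta)$ for some unimodular $c(\theta)$; writing $c(\theta)=e^{2i\alpha(\theta)}$, I replace $v(\theta)$ by $e^{-i\alpha(\theta)}v(\theta)$ (which is now self-conjugate, hence real) and compensate by $\varphi(\theta)\leftarrow e^{i\alpha(\theta)}\varphi(\theta)$ so that $\langle\varphi(\theta),v(\theta)\rangle=1$ is preserved. Because the rank-one projection $f\mapsto\langle\varphi(\theta),f\rangle v(\theta)$ commutes with complex conjugation, $\langle\varphi(\theta),f\rangle\in\mathbb{R}$ whenever $f\in\mathcal{B}_{\mathbb{R}}$.

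To upgrade to positivity I would start from
$$\lambda(\theta)^{-n}P_\theta^n f \;=\;\langle\varphi(\theta),f\rangle\,v(\theta)\;+\;\lambda(\theta)^{-n}Q(\theta)^n f.$$
By (v) and (vi) the remainder tends to $0$ in $\mathcal{B}$, hence in $L^1(m)$ via the continuous injection. For $f\ge 0$ in $\mathcal{B}$, a trivial induction using positivity of $P$ yields $P_\theta^n f\ge 0$, and since $\lambda(\theta)>0$ the left-hand side is nonnegative; the positive cone being closed in $L^1$, the limit $\langle\varphi(\theta),f\rangle v(\theta)$ is nonnegative in $L^1$, and therefore in $\mathcal{B}$. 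Applied to $f=\mathds{1}$ (available thanks to assumption (2) on $\mathcal{B}$), this gives $\langle\varphi(\theta),\mathds{1}\rangle v(\theta)\ge 0$; since $\langle\varphi(0),\mathds{1}\rangle=\int\mathds{1}\,dm=1$, the scalar $\langle\varphi(\theta),\mathds{1}\rangle$ is real, nonzero, and of constant sign for $|\theta|$ small, so after a possible global sign flip shared by $v(\theta)$ and $\varphi(\theta)$ one gets $v(\theta)\ge 0$. Finally, for any $f\ge 0$ in $\mathcal{B}$, the real scalar $\langle\varphi(\theta),f\rangle$ must itself be nonnegative, because $\langle\varphi(\theta),f\rangle v(\theta)\ge 0$ and $v(\theta)\ge 0$ is not the zero class (indeed $\langle\varphi(\theta),v(\theta)\rangle=1$). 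Hence $\varphi(\theta)\ge 0$.

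The main obstacle I anticipate is the realness step: one must carefully invoke the isolation and simplicity of $\lambda(\theta)$ inherited from the unperturbed operator to rule out the a priori possibility of a complex eigenvalue. Once $\lambda(\theta)$ is shown to be real and positive, the remaining work is the standard Perron--Frobenius iteration, suitably adapted to the fact that $\mathcal{B}$ consists of equivalence classes modulo $m$, which is precisely why testing against $f=\mathds{1}$ and the reference measure $m$ replaces the use of a Dirac mass as noted in the discussion preceding the proposition.
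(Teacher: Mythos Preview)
Your argument is correct and follows the same overall line as the paper: realness of $\lambda(\theta)$ via conjugation and simplicity of the dominant eigenvalue, positivity by continuity from $\lambda(0)=1$, and then positivity of the eigenvector via the iteration $\lambda(\theta)^{-n}P_\theta^n\mathds{1}$. There is a small sign slip in your phase adjustment (with $\overline{v(\theta)}=e^{2i\alpha}v(\theta)$ one should multiply by $e^{i\alpha}$, not $e^{-i\alpha}$, to obtain a real vector), but this is a harmless typo.

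The one substantive difference is in the proof that $\varphi(\theta)\ge 0$. The paper iterates the \emph{dual} operator, showing that $\lambda(\theta)^{-n}(P_\theta^{\star})^n\psi(\theta)\to\tilde\varphi(\theta)$ for a positive functional $\psi(\theta)$ with $\langle\psi(\theta),\tilde v(\theta)\rangle=1$; the existence of such a $\psi(\theta)$ is precisely the point of the discussion preceding the proposition about replacing Dirac masses by the reference measure $m$. You instead read off the sign of $\langle\varphi(\theta),f\rangle$ directly from the primal iteration applied to an arbitrary $f\ge 0$, using that $v(\theta)\ge 0$ is nonzero. Your route is slightly more elementary and sidesteps the need for that separating positive functional; on the other hand, the paper's renormalization $\tilde v(z)=\langle\varphi(z),\mathds{1}\rangle\,v(z)$ remains holomorphic in $z$, whereas your phase factor $e^{i\alpha(\theta)}$ is defined only for real $\theta$ (which, however, is all the proposition requires).
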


\begin{proof}
As $P_{\theta}$ is a real operator, we have $P_{\theta} \overline{f}
= \overline{P_{\theta}f}$ for all $f \in \mathcal{B}$. So, we have
$P_{\theta} \overline{v(\theta)} = \overline{P_{\theta} v(\theta)} =
\overline{\lambda(\theta)} \, \overline{v(\theta)}$. Since
$\lambda(\theta)$ is the unique eigenvalue of $P_{\theta}$ with
maximal modulus, we get $\overline{\lambda(\theta)} =
\lambda(\theta)$, and hence $\lambda(\theta) \in \mathbb{R}$. Since
$\lambda(0) = 1$, by a continuity argument, we obtain
$\lambda(\theta) > 0$ for small $\theta$. For $z \in \mathbb{C}$
small enough, $<\varphi(z), \mathds{1}> \neq 0$. We define
$\tilde{v}(z) = <\varphi(z), \mathds{1}> v(z)$ and
$\tilde{\varphi}(z) = <\varphi(z), \mathds{1}>^{-1} \varphi(z)$.
Those new eigenfunctions satisfy obviously the conclusions of the
previous proposition. We have just to prove that $\tilde{v}(\theta)$
and $\tilde{\varphi}(\theta)$ are positive for $\theta \in
\mathbb{R}$ small enough. By the spectral decomposition of
$P_{\theta}$, we see that $\lambda(\theta)^{-n}P_{\theta}^n
\mathds{1}$ goes to $\tilde{v}(\theta)$ in $\mathcal{B}$, and hence
in $L^1(m)$. We then get $\tilde{v}(\theta) \ge 0$ because
$P_{\theta}$ is a positive operator and $\lambda(\theta)$ is
positive too. Now, let $\psi(\theta) \in \mathcal{B}^{\star}$
positive such that $<\psi(\theta), \tilde{v}(\theta)> = 1$. Then,
$\lambda(\theta)^{-n}(P_{\theta}^{\star})^n \psi(\theta)$ goes to
$<\psi(\theta), v(\theta)> \varphi(\theta) =
\tilde{\varphi}(\theta)$, which proves that
$\tilde{\varphi}(\theta)$ is a positive linear form.
\end{proof}

We denote $$\Lambda(\theta) = \log \lambda(\theta)$$ We then have

\begin{Pro} \em

There exists $0 < \theta_2 < \theta_1$ such that for every $\theta
\in \mathbb{B}$ with $|\theta| < \theta_2$ and every $f \in
\mathcal{B}$ with $f \ge 0$ and $\int f \, dm = 1$, we have
$$\lim_{n \to \infty} \frac{1}{n} \log \int e^{\theta S_n} f \, dm =
\Lambda(\theta)$$
\end{Pro}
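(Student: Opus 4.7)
The plan is to invoke the spectral decomposition of Proposition 1 via the identity $\int e^{\theta S_n} f \, dm = \langle m, P_{\theta}^n f\rangle$. Since $m \in \mathcal{B}^{\star}$, this gives
$$\int e^{\theta S_n} f \, dm = \lambda(\theta)^n A(\theta, f) + R_n(\theta, f),$$
with $A(\theta, f) := \langle \varphi(\theta), f\rangle \langle m, v(\theta)\rangle$ and $|R_n(\theta, f)| = |\langle m, Q(\theta)^n f\rangle| \le C' (1 - \eta_1 - \eta_2)^n \|f\|_{\mathcal{B}}$ by (vi), where $C'$ is independent of $n$ and $\theta$. By (v) and Proposition 2 we have $\lambda(\theta) > 1 - \eta_1 > 1 - \eta_1 - \eta_2$, so once $A(\theta, f) > 0$ is secured, the leading term dominates the remainder exponentially, and dividing by $n$ inside the logarithm will yield $\log \lambda(\theta) = \Lambda(\theta)$.

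The main obstacle is proving $A(\theta, f) > 0$ uniformly over the admissible $f$ for small $|\theta|$. The factor $\langle m, v(\theta)\rangle$ is continuous in $\theta$ with value $1$ at $\theta = 0$, hence positive on a neighborhood of $0$ independent of $f$. The factor $\langle \varphi(\theta), f\rangle$ is non-negative by Proposition 2 together with $f \ge 0$, but strict positivity is delicate because a general $f$ might in principle lie in the kernel of the positive form $\varphi(\theta)$, and the Hennion--Herv\'e trick of testing against a Dirac mass at a point where $f > 0$ is unavailable in our setting. I would replace it by the elementary pointwise bound $e^{\theta S_n} \ge e^{-n|\theta| M}$ with $M := \|\phi\|_{\infty} < \infty$ since $\phi$ is bounded; together with $\int f \, dm = 1$ this gives $\int e^{\theta S_n} f \, dm \ge e^{-n|\theta| M}$. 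If $A(\theta, f) = 0$, the decomposition would force $e^{-n|\theta| M} \le C'(1 - \eta_1 - \eta_2)^n \|f\|_{\mathcal{B}}$, which fails for large $n$ as soon as $|\theta| < -\log(1 - \eta_1 - \eta_2)/M$. Choosing $\theta_2$ below $\theta_1$, below this explicit threshold, and inside the neighborhood where $\langle m, v(\theta)\rangle > 0$ then secures $A(\theta, f) > 0$ for every admissible $f$.

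With positivity established, write $\int e^{\theta S_n} f \, dm = \lambda(\theta)^n A(\theta, f)\bigl(1 + r_n(\theta, f)\bigr)$ with $r_n(\theta, f) = R_n(\theta, f)/[\lambda(\theta)^n A(\theta, f)]$. The inequality $\lambda(\theta) > 1 - \eta_1 - \eta_2$ forces $r_n(\theta, f) \to 0$, and hence $\log(1 + r_n(\theta, f)) \to 0$. Taking $\frac{1}{n}\log$ then yields
$$\frac{1}{n} \log \int e^{\theta S_n} f \, dm = \log \lambda(\theta) + \frac{1}{n} \log A(\theta, f) + \frac{1}{n} \log\bigl(1 + r_n(\theta, f)\bigr) \longrightarrow \Lambda(\theta),$$
since $A(\theta, f)$ is a strictly positive constant in $n$. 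The whole novelty of the argument lies in substituting the Dirac-mass positivity of \cite{HH} by this crude lower bound coming from the boundedness of $\phi$.
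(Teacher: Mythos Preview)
Your proof is correct and shares the paper's overall structure: express $\int e^{\theta S_n} f\,dm = \langle m, P_\theta^n f\rangle$ via the spectral decomposition of Proposition~4, then show the leading term $\lambda(\theta)^n A(\theta,f)$ dominates the remainder coming from $Q(\theta)^n$. The one substantive difference lies in how you secure $A(\theta,f)>0$. The paper handles this by continuity, observing that $\langle\varphi(\theta),f\rangle\,\langle m,v(\theta)\rangle \to \langle m,f\rangle\,\langle m,v\rangle = 1$ as $\theta\to 0$, hence is positive for small $\theta$; this is shorter, but as written the smallness threshold may depend on $\|f\|_{\mathcal B}$. Your contradiction argument via the elementary pointwise bound $e^{\theta S_n}\ge e^{-n|\theta|\,\|\phi\|_\infty}$ is a genuinely different device: it is slightly longer but produces a $\theta_2$ that is uniform over all $f\ge 0$ with $\int f\,dm=1$, exactly as the proposition is quantified. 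For the intended application (Theorem~1, with a single fixed density $f=d\nu/dm$) the paper's shortcut is adequate; your version is a cleaner match to the stated uniformity.
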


\begin{proof}
We have the identity $$\begin{aligned} \int e^{\theta S_n} f \, dm =
<m, P_{\theta}^n(f)> & = \lambda(\theta)^n < \varphi(\theta), f> \,
<m, v(\theta)> + <m, Q(\theta)^n f> \\ & = \lambda(\theta)^n ( <
\varphi(\theta), f> \, <m, v(\theta)> + \lambda(\theta)^{-n} <m,
Q(\theta)^n f>) \end{aligned}$$ All involved quantities are
positive, hence we can write $$\frac{1}{n} \log \int e^{\theta S_n}
f \, dm = \log \lambda(\theta) +\frac{1}{n} \log (<\varphi(\theta),
f> \, <m, v(\theta)> + \lambda(\theta)^{-n} <m, Q(\theta)^n f>)$$
Since $$\lim_{\theta \to 0} <\varphi(\theta), f> \, <m, v(\theta)> =
1$$ and since the spectral radius of $Q(\theta)$ is strictly less
than $\lambda(\theta)$, it's easy to see that for $\theta$ small
enough, we have $$\lim_{n \to \infty} \frac{1}{n} \log
(<\varphi(\theta), f> \, <m, v(\theta)> + \lambda(\theta)^{-n} <m,
Q(\theta)^n f>) = 0$$
\end{proof}

In order to apply Gartner-Ellis theorem, we just have to show that
$\Lambda$ is differentiable function, strictly convex in a
neighborhood of $0$. Since $\lambda$ is real-analytic, $\Lambda$ is
too. Computations from perturbation theory \footnote{See corollaries
III.11 and III.6 in \cite{HH}.} show that $\lambda'(0) = \int \phi
\, d \mu = 0$ and $\lambda''(0) = \sigma^2$, so we have
$\Lambda''(0) = \frac{\lambda''(0)\lambda(0) -
\lambda'(0)^2}{\lambda(0)^2} = \sigma^2 > 0$ and we can now apply the following local version of Gartner-Ellis theorem, whose proof can be found in lemma XIII.2 in \cite{HH} :

\begin{Pro} \em

For all $n \ge 1$, denote by $\mathbb{P}_n$ a probability measure on some measurable space $(\Omega, \mathcal{T})$, by $\mathbb{E}_n$ the corresponding expectation operator and by $S_n$ a real valued random variable. Assume that on some interval $[- \theta_{\Lambda}, \theta_{\Lambda}]$, $\theta_{\Lambda} > 0$, we have $$\lim_{n \to \infty} \frac{1}{n} \log \mathbb{E}_n[\exp(\theta S_n)] = \Lambda(\theta),$$ where $\Lambda$ is a strictly convex continuously differentiable function satisfying $\Lambda'(0) = 0$. \\ Define $\epsilon_+ = \frac{\Lambda(\theta_{\Lambda})}{\theta_{\Lambda}} > 0$, $\epsilon_- = \frac{\Lambda(\theta_{\Lambda})}{\theta_{\Lambda}} < 0$ and $c(\epsilon) = \underset{| \theta | \le \theta_{\Lambda}}{\sup} \{\theta \epsilon - \Lambda(\theta) \}$. Then $c$ is a positive function, strictly convex on $[\epsilon_-, \epsilon_+]$, continuous, vanishing only at $0$, and, for all $0 < \epsilon < \epsilon_0 = \epsilon_+$, we have $$ \lim_{n \to \infty} \frac{1}{n} \log \mathbb{P}_n (S_n > n \epsilon) = - c(\epsilon)$$
\end{Pro}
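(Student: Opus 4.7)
The plan is to establish the local Gartner--Ellis principle in three steps: (i) deduce the stated analytic properties of $c$ from convex-duality considerations; (ii) obtain the exponential upper bound by the standard Chebyshev/Chernoff trick; (iii) match it with a lower bound via Cram\'er's exponential tilt, which will be the main technical hurdle.

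First, since $c(\epsilon)$ is the pointwise supremum of the affine functions $\theta \mapsto \theta\epsilon - \Lambda(\theta)$ for $|\theta|\le\theta_\Lambda$, it is automatically convex and lower semicontinuous. The normalisation $\mathbb{E}_n[e^{0}] = 1$ forces $\Lambda(0) = 0$, and because $\Lambda$ is convex with $\Lambda'(0) = 0$, the origin is the global minimum of $\Lambda$, so each affine function lies below $0$ at $\epsilon = 0$, yielding $c(0) = 0$ and $c \ge 0$. That $0$ is the \emph{unique} zero comes from the following: if $c(\epsilon) = 0$ then $\theta\epsilon \le \Lambda(\theta)$ for every admissible $\theta$; dividing by $\theta > 0$ and letting $\theta \to 0^+$ gives $\epsilon \le \Lambda'(0) = 0$, and the mirror argument with $\theta < 0$ gives $\epsilon \ge 0$. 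Strict convexity of $c$ on $(\epsilon_-, \epsilon_+)$ follows because $\Lambda$ is $C^1$ and strictly convex on $(-\theta_\Lambda, \theta_\Lambda)$: the supremum is attained at the unique interior point $\theta^\star(\epsilon)$ with $\Lambda'(\theta^\star) = \epsilon$, and $\theta^\star$ depends smoothly and strictly monotonically on $\epsilon$. Continuity on the closed interval then follows from convexity plus a short boundary argument.

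For the upper bound, Markov's inequality applied to $e^{\theta S_n}$ gives, for any $\theta \in [0, \theta_\Lambda]$,
$$\mathbb{P}_n(S_n > n\epsilon) \le e^{-n\theta\epsilon}\,\mathbb{E}_n[e^{\theta S_n}].$$
Taking $\frac{1}{n}\log$, passing to the $\limsup$ via the hypothesis on $\Lambda$, and optimising over $\theta$ yields
$$\limsup_{n\to\infty} \frac{1}{n}\log \mathbb{P}_n(S_n > n\epsilon) \le -\sup_{0\le\theta\le\theta_\Lambda}\{\theta\epsilon - \Lambda(\theta)\} = -c(\epsilon),$$
the last equality using that for $\epsilon > 0$ any $\theta < 0$ gives $\theta\epsilon - \Lambda(\theta) \le 0 \le c(\epsilon)$, hence contributes nothing to the supremum defining $c(\epsilon)$.

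The matching lower bound is the genuine obstacle, and I would obtain it by Cram\'er's exponential change of measure. For $\epsilon \in (0, \epsilon_+)$ choose $\theta^\star \in (0, \theta_\Lambda)$ with $\Lambda'(\theta^\star) = \epsilon$ (available by the assumptions on $\Lambda$ and the definition of $\epsilon_+$), and introduce tilted laws $d\tilde{\mathbb{P}}_n = \mathbb{E}_n[e^{\theta^\star S_n}]^{-1} e^{\theta^\star S_n}\, d\mathbb{P}_n$. For any $\delta > 0$ one has the elementary bound
$$\mathbb{P}_n(S_n > n\epsilon) \ge \mathbb{E}_n[e^{\theta^\star S_n}]\, e^{-\theta^\star n(\epsilon+\delta)}\, \tilde{\mathbb{P}}_n\bigl(n\epsilon < S_n \le n(\epsilon+\delta)\bigr),$$
so after taking $\frac{1}{n}\log$ and using $\Lambda(\theta^\star) - \theta^\star\epsilon = -c(\epsilon)$,
$$\liminf_{n\to\infty} \frac{1}{n}\log \mathbb{P}_n(S_n > n\epsilon) \ge -c(\epsilon) - \theta^\star\delta + \liminf_{n\to\infty}\frac{1}{n}\log \tilde{\mathbb{P}}_n\bigl(n\epsilon < S_n \le n(\epsilon+\delta)\bigr).$$
It remains to check that the slab probability does not decay exponentially, which I would verify by a second Chebyshev estimate inside the tilted measure: for small $h > 0$,
$$\tilde{\mathbb{P}}_n(S_n \le n(\epsilon-\eta)) \le e^{nh(\epsilon-\eta)}\, \tilde{\mathbb{E}}_n[e^{-hS_n}] = \exp\bigl[n\bigl(h(\epsilon-\eta) + \Lambda(\theta^\star - h) - \Lambda(\theta^\star) + o(1)\bigr)\bigr],$$
and the Taylor expansion $\Lambda(\theta^\star - h) - \Lambda(\theta^\star) = -h\epsilon + O(h^2)$ makes the bracket strictly negative for $h$ small enough; a symmetric estimate handles the upper edge $\{S_n > n(\epsilon+\delta)\}$. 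Hence the slab probability tends to $1$, the liminf term vanishes, and sending $\delta \to 0$ gives $\liminf_{n\to\infty} \frac{1}{n}\log \mathbb{P}_n(S_n > n\epsilon) \ge -c(\epsilon)$. Combined with the upper bound this produces the claimed limit.
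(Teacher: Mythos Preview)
The paper does not actually prove this proposition: it simply refers to lemma~XIII.2 of Hennion--Herv\'e \cite{HH}. Your proposal supplies the standard Cram\'er/G\"artner--Ellis argument in full, which is the right strategy and is essentially what one finds in \cite{HH} or \cite{DZ}. The treatment of the properties of $c$ and the Chernoff upper bound is correct.

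There is, however, a genuine slip in your lower bound. You tilt at the $\theta^\star$ with $\Lambda'(\theta^\star)=\epsilon$ and consider the slab $\{n\epsilon < S_n \le n(\epsilon+\delta)\}$, whose left edge sits \emph{exactly} at the tilted mean. Your Chebyshev estimate then controls $\tilde{\mathbb P}_n(S_n \le n(\epsilon-\eta))$ for some auxiliary $\eta>0$, together with the right tail $\tilde{\mathbb P}_n(S_n > n(\epsilon+\delta))$. These two bounds only show that $\tilde{\mathbb P}_n\bigl(n(\epsilon-\eta) < S_n \le n(\epsilon+\delta)\bigr)\to 1$; they say nothing about the smaller slab $\{n\epsilon < S_n \le n(\epsilon+\delta)\}$, and your conclusion ``hence the slab probability tends to $1$'' does not follow. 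Indeed, with $\eta=0$ the bracket $-h\eta + O(h^2)$ is not strictly negative, and since $\epsilon$ is the tilted mean one should expect $\tilde{\mathbb P}_n(S_n \le n\epsilon)$ to stay bounded away from $0$ rather than vanish.

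The fix is standard and minor: either tilt at $\theta^\star(\epsilon')$ for some $\epsilon' \in (\epsilon,\epsilon+\delta)$ so that the slab is centred at the tilted mean, or keep the tilt at $\theta^\star(\epsilon)$ but use a symmetric slab $\{n(\epsilon-\delta) < S_n \le n(\epsilon+\delta)\}$, which yields
\[
\liminf_{n\to\infty}\tfrac1n\log\mathbb P_n\bigl(S_n > n(\epsilon-\delta)\bigr) \ge -c(\epsilon) - \theta^\star\delta,
\]
and then recover the bound at $\epsilon$ by applying this with $\epsilon$ replaced by $\epsilon+\delta$ and letting $\delta\to 0$, using the continuity of $c$ and of $\epsilon\mapsto\theta^\star(\epsilon)$ that you already established.
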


We now prove our Theorem 2.

\begin{proof}[Central Limit Theorem]
By Levy's continuity theorem, it suffices to show that for all $t
\in \mathbb{R}$ $$\lim_{n \to \infty} \int e^{i t
\frac{S_n}{\sqrt{n}}} f \, dm= e^{- \frac{t^2 \sigma^2}{2}}$$ We
have $$\int e^{it \frac{S_n}{\sqrt{n}}} f \, dm = <m,
P_{\frac{it}{\sqrt{n}}}^n(f)> = \lambda(\frac{it}{\sqrt{n}})^n
<\varphi(\frac{it}{\sqrt{n}}), f> \, <m, v(\frac{it}{\sqrt{n}})> +
<m, Q(\frac{it}{\sqrt{n}})^n f>$$ We just have to prove that
$$\lim_{n \to \infty} \lambda(\frac{it}{\sqrt{n}})^n = e^{-
\frac{t^2 \sigma^2}{2}}$$ But the Taylor's expansion says that in a complex neighborhood of
$0$ $$\lambda(z) = \lambda(0) + \lambda'(0) z +
\frac{\lambda''(0)}{2}z^2 + z^2 \eta(z) = 1 + \frac{\sigma^2 z^2}{2}
+ z^2 \eta(z)$$ where $\lim_{z \to 0} \eta(z) = 0$. Then, a standard
computation concludes the proof.
\end{proof}
\section{Application to uniformly expanding maps}The main application of our Theorem 1
will be   to multidimensional piecewise uniformly expanding maps, in
particular when we equip them with the space of the quasi-H\"older
functions. This space, introduced by Keller \cite{K}, developed by
Blank~\cite{Bl} and successfully applied by Saussol \cite{S} and
successively by Buzzi~\cite{Bu} (see also \cite{BK}) and
Tsujii~\cite{Ts}, reveals to be very useful to control the
oscillations of a function under the iteration of the transfer
operator across the discontinuities of the map. Moreover it verifies
the algebraic assumption 3 in Section 2 above;  it is not
straightforward  to replace this condition and in order to fit with
the Hennion-Herv\'e theory, if one uses the more conventional spaces
of bounded variation functions (see for instance \cite{Ad, BG, C,
Liv2, PGB})
 or the Sobolev spaces \cite{Tho}, and this topic deserves to be
investigated in the future \footnote{In fact, if we check the proof,
we only need the fact that
 $\mathcal{B}$ is Banach algebra and $\phi \in \mathcal{B}$ to prove that the operators $P_z$ are well defined and
 holomorphic in $z$. So we can suppose the weaker assumption that $\phi$ is such that $P_z$ define a holomorphic family
  of bounded operators on $\mathcal{B}$ for $z$ in a complex neighbourhood of $0$.}. \\

Let us now recall the precise definitions of our system by following
closely the assumptions imposed in \cite{S}. Let $M \subset
\mathbb{R}^d$ be a compact subset with $\overline{{\rm int} M} = M$
and piecewise $C^1$ boundary. We denote by $d$ the Euclidean
distance and by $m$ the Lebesgue measure on $\mathbb{R}^d$. We can
assume without loss of generality that $m(M) = 1$. For $A \subset M$
and $\epsilon > 0$, we denote $B_{\epsilon}(A) = \{x \in
\mathbb{R}^d \, / \, d(x, A) \le \epsilon \}$. Let $T : M \to M$ a
measurable application, and suppose there exists $0 < \alpha \le 1$
such that for some small enough $\epsilon_0$ we have :

\begin{enumerate}
\item There are finitely many disjoint open sets $U_i \subset M$ with $m(M \setminus \cup_{i} U_i) = 0$ such that for each $i$, $T_i := T \vert_{U_i} \to M$ is $C^{1+\alpha}$ and can be extended on a neighborhood $V_i$ of $U_i$ to a $C^{1+\alpha}$ map $T_i : V_i \to \mathbb{R}^d$ such that $B_{\epsilon_0}(T_iU_i) \subset T_i(V_i)$. Moreover, each $T_i : V_i \to \mathbb{R}^d$ is injective with $C^{1+\alpha}$ inverse;
\item There exists $c > 0$ such that for any $i$, and any $x,y \in T(U_i)$ with $d(x,y) \le \epsilon_0$ we have $$\vert \det DT_i^{-1}(x) - \det DT_i^{-1}(y) \vert \le c \vert \det DT_i^{-1}(x) \vert d(x,y)^{\alpha} ;$$
\item There exists $s(T) < 1$ such that $$\sup_i \sup_{x \in T_i (V_i)} || DT_i^{-1}(x) || < s(T);$$
\item Boundaries of $U_i$ are piecewise $C^1$ codimension one embedded compact submanifolds and we have $\eta_0(T) < 1$ where $$\eta_0(T) = s(T)^{\alpha} + \frac{4s(T)}{1-s(T)}Y(T) \frac{\gamma_{d-1}}{\gamma_d}$$ $$Y(T) = \sup_{x \in \mathbb{R}^d} \sum_i \sharp \{ {\rm smooth \; pieces \; intersecting \; } \partial U_i {\rm \; and \; containing \; } x \}$$ and $\gamma_d = \frac{\pi^{d/2}}{(d/2)!}$ is the $d$-volume of the $d$-dimensional unit ball of $\mathbb{R}^d$.
\end{enumerate}
The last condition can be greatly weakened, but the condition in
\cite{S} is of a very abstract nature, and it's more easy to handle
with this one when the boundaries of the $U_i$ are smooth. We define
then the functional space on which acts the transfer operator. Let
$f \in L^1(\mathbb{R}^d)$. If $A \subset \mathbb{R}^d$ is a Borel
subset, we define the oscillation of $f$ over $A$ by $${\rm
osc}(f,A) = \underset{x_1, x_2 \in A}{\rm ess \, sup} \, |f(x_1) -
f(x_2)|$$ where the essential supremum is taken with respect to the
product measure $m \times m$ on $A \times A$. We get a lower
semi-continuous and hence measurable function $x \to {\rm osc}(f,
B_{\epsilon}(x))$. We set $$|f|_{\alpha} = \sup_{0 < \epsilon \le
\epsilon_0} \frac{1}{\epsilon^{\alpha}} \int_{\mathbb{R}^d} {\rm
osc}(f, B_{\epsilon}(x)) dx$$ We define $$V_{\alpha}(\mathbb{R}^d) =
\{f \in L^1(\mathbb{R}^d) \, / \, |f|_{\alpha} < \infty \}$$ and
$$V_{\alpha}(M) = \{f \in V_{\alpha}(\mathbb{R}^d) \, / \, {\rm
supp} \, f \subset M \}$$ both endowed with the norm $||f||_{\alpha}
= ||f||_{L^1_m} + |f|_{\alpha}$. Adapting proofs from \cite{K}, we
can show that $V_{\alpha}(M)$ is Banach space, with compact
injection in $L^1(M)$. It's proven in \cite{S} that $V_{\alpha}(M)$
is also a Banach algebra, and it's obviously a Banach lattice. So,
if we want to apply our previous results to those maps, we are left
to prove that the transfer operator for $T$ acts on $V_{\alpha}(M)$
and is quasi-compact of diagonal type. But Saussol proved in this
context a Lasota-Yorke inequality (lemma 4.1 in \cite{S}) which
implies the quasi-compactness of the Perron-Frobenius operator.
Hence, assuming that the system is mixing, we get the central limit
theorem and the large deviations principle for bounded real
observables in $V_{\alpha}(M)$.

\end{document}